	\title{Track number of line graphs}
	\author{Deepak~Rajendraprasad}
 	\affil
	{
		Department of Computer Science and Engineering\\ 
		Indian Institute of Technology Palakkad
	}
	\newtheorem{theorem}{Theorem}[section]
	\newtheorem{corollary}[theorem]{Corollary}
	\newtheorem{conjecture}[theorem]{Conjecture}
	\theoremstyle{definition}
	\newtheorem{definition}[theorem]{Definition}
	\theoremstyle{remark}
	\newtheorem*{remark}{Remark}
	\def\calO{\ensuremath{\mathcal{O}}}
	\def\calF{\ensuremath{\mathcal{F}}}
	\def\calH{\ensuremath{\mathcal{H}}}
	\def\tends{\rightarrow}
	\def\into{\rightarrow}
	\def\half{\frac{1}{2}}
	\def\third{\frac{1}{3}}
	\newcommand{\ceil}[1]{\left\lceil #1 \right\rceil}
	\newcommand{\floor}[1]{\left\lfloor #1 \right\rfloor}
	\DeclareMathOperator{\elb}{elb}
	\DeclareMathOperator{\inelb}{in-elb}
	\DeclareMathOperator{\eq}{eq}
	\DeclareMathOperator{\cc}{cc}
	\def\track{\tau}
\begin{document}
\maketitle

\begin{abstract}
	The {\em track number} $\tau(G)$ of a graph $G$ is 
	the minimum number of interval graphs whose union is $G$.
	We show that the track number of the line graph $L(G)$ of a 
	triangle-free graph $G$ is at least $\lg \lg \chi(G) + 1$,
	where $\chi(G)$ is the chromatic number of $G$.
	Using this lower bound and two classical Ramsey-theoretic results 
	from literature, we answer two questions posed by 
	Milans, Stolee, and West [J. Combinatorics, 2015] (MSW15).
	First we show that the track number $\track(L(K_n))$ 
	of the line graph of the complete graphs $K_n$ is at least 
	$\lg\lg n - o(1)$.
	This is asymptotically tight and it improves the bound 
	of $\Omega(\lg\lg n/ \lg\lg\lg n)$ in MSW15.
	Next we show that for a family of graphs $\mathcal{G}$,
	$\{\tau(L(G)):G \in \mathcal{G}\}$ is bounded if and only if 
	$\{\chi(G):G \in \mathcal{G}\}$ is bounded.
	This affirms a conjecture in MSW15.
	All our lower bounds apply even if one enlarges the covering family
	from the family of interval graphs to the family of chordal graphs.

\medskip
\noindent
	MSC codes: 05C55, 05C20, 05C62, 05C15.
\end{abstract}

\section{Introduction}

	The {\em track number} $\track(G)$ of a graph $G$ is 
	the minimum number of interval graphs whose union is $G$.
	Heldt, Knauer, and Ueckerdt \cite{heldt2011track} conjectured
	that the track number of line graphs is unbounded.
	Milans, Stolee, and West \cite{milans2015ordered} proved
	this conjecture by showing that the track number $\track(L(K_n))$
	of the line graph of the $n$-vertex complete graph $K_n$ is 
	$\Omega(\lg\lg n / \lg\lg\lg n)$.
	They suspected that the denominator in the lower bound could 
	be eliminated and also proposed

\begin{conjecture}[Milans, Stolee, West \cite{milans2015ordered}]
	\label{conjMilans}
	For a sequence $\left(G_n \right)_{n=1}^{\infty}$ of graphs, 
	if $\chi(G_n) \tends \infty$, then $\track(L(G_n)) \tends \infty$,
	where $\chi(G)$ and $L(G)$ denote, respectively, 
	the chromatic number and the line graph of the graph $G$.
\end{conjecture}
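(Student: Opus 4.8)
The plan is to deduce the conjecture from the triangle-free lower bound $\track(L(H)) \geq \lg\lg\chi(H) + 1$ (valid for triangle-free $H$) together with two monotonicity facts and one classical importation. The conjecture is equivalent to the assertion that $\track(L(G))$ is bounded below by some divergent function of $\chi(G)$, so it suffices to produce, inside an arbitrary graph $G$ of large chromatic number, a triangle-free subgraph whose chromatic number is still large, and to argue that its line graph sits inside $L(G)$ in a track-number-preserving way.

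First I would establish that the track number is monotone under induced subgraphs: if $S \subseteq V(G)$ then $\track(G[S]) \leq \track(G)$, because restricting each interval graph in a union $G = I_1 \cup \cdots \cup I_t$ to $S$ yields interval graphs (induced subgraphs of interval graphs are interval) whose union has edge set $E(G)\cap\binom{S}{2} = E(G[S])$. I would then pair this with the elementary observation that for any subgraph $H \subseteq G$ the line graph $L(H)$ is an \emph{induced} subgraph of $L(G)$---two edges of $H$ are adjacent in $L(H)$ precisely when they are adjacent in $L(G)$---so that $\track(L(H)) \leq \track(L(G))$ for every (not necessarily induced) subgraph $H$ of $G$. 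The identical reasoning holds with chordal in place of interval, since chordality is also hereditary, and this is the source of the chordal-cover strengthening stated in the abstract.

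The key external input is R\"odl's theorem, which supplies a nondecreasing unbounded function $f$ with $f(m) \tends \infty$ such that every graph $G$ contains a triangle-free subgraph $H$ with $\chi(H) \geq f(\chi(G))$. Given a sequence $(G_n)$ with $\chi(G_n) \tends \infty$, I would extract such triangle-free subgraphs $H_n \subseteq G_n$ and chain the inequalities $\track(L(G_n)) \geq \track(L(H_n)) \geq \lg\lg\chi(H_n) + 1 \geq \lg\lg f(\chi(G_n)) + 1$, whose right-hand side diverges because $f(\chi(G_n)) \tends \infty$. This proves the conjecture; since the bound is quantitative it also yields the implication that bounded track number forces bounded chromatic number, which is the nontrivial direction of the family version in the abstract.

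The monotonicity steps are routine; the real content is imported, so the main obstacle is to pin down the correct classical result and apply it in the right direction. The triangle-free bound is useful only when $\chi(H)$ itself grows without bound, so merely finding a large triangle-free subgraph, or a triangle-free subgraph of bounded chromatic number, would be worthless; what is needed is exactly R\"odl's \emph{high $\chi$ forces a high-$\chi$ triangle-free subgraph}, and not the superficially similar but here-unusable existence of high-girth graphs of high chromatic number from the Erd\H{o}s construction. A reassuring point is that the rate at which $f$ degrades is immaterial, since the conclusion requires only divergence and not an explicit bound.
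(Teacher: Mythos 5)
Your proposal is correct and follows essentially the same route as the paper: monotonicity of the covering number under subgraphs (because $L(H)$ is an induced subgraph of $L(G)$ for any subgraph $H$ of $G$ and the covering families are hereditary), the triangle-free lower bound $\lg\lg\chi(H)+1 \leq \cc(L(H)) \leq \track(L(H))$, and R\"odl's theorem to extract a triangle-free subgraph of large chromatic number. The only cosmetic difference is that the paper quotes R\"odl's theorem with a clique alternative ($G$ contains either $K_m$ or a triangle-free subgraph of chromatic number $n$) and disposes of the clique case separately via its bound on $\cc(L(K_n))$ (itself obtained from Kim's triangle-free construction inside $K_n$), whereas you invoke the corollary form that absorbs the clique case into a single divergent function $f$; both are legitimate and lead to the same conclusion.
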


	In this note, first we show that $\track(L(K_n)) = (1 + o(1)) \lg\lg n$
	and then prove the above conjecture. 
 	Milans et al.\ obtain bounds on $\track(L(K_n))$ by connecting the
	problem with two problems in Ramsey theory of ordered hypergraphs.
	We use results and techniques from a paper by
	Esperet, Gimbel, and King \cite{esperet2010covering} 
	who studied the covering of line graphs with equivalence relations.
	The techniques there are close in spirit to that of 
	the Erd\"os-Szekeres theorem on total orders 
	and hence also Ramsey theoretic.
	Incidentally, the result of Esperet et al.\ disproved a 
	conjecture of McClain \cite{mcclain2009clique} that the 
	line graph of any triangle-free graph can be covered by three
	equivalence graphs. 
	We first work with triangle-free graphs and then lift the
	lower bounds obtained there to complete graphs and 
	general graphs using two classical results from Ramsey theory of graphs.

\subsection{Notation and preliminaries}
	All graphs considered in this note are finite, simple
	and do not contain self-loops. 
	Logarithm to the bases $2$  and $e$ are denoted by $\lg$ and $\ln$
	respectively.
	The line graph $L(G)$ of a graph $G$ is the intersection graph
	of the edge-set of $G$.	
	That is, two vertices of $L(G)$ are adjacent in $L(G)$
	if and only if the corresponding two edges of $G$ share
	a common vertex.
	The chromatic number of a graph $G$ is denoted by $\chi(G)$. 
	The subgraph of a graph $G$ induced on a subset $S$ of
	the vertices of $G$ is denoted by $G[S]$.

	A {\em chordal graph} is a graph with no induced cycles of 
	length more than three. 
	A graph is an {\em interval graph} if it can be represented as 
	the intersection graph of intervals on a straight line.
	An {\em equivalence graph} is a disjoint union of cliques.
	The complete graph on $n$ vertices is denoted by $K_n$.

	The {\em covering number} of a graph $G$ with respect to
	a family $\calF$ of graphs is the minimum number of graphs
	from $\calF$ whose union is $G$. 
	For example, the {\em arboricity} $a(G)$, 
	the {\em equivalence covering number} $\eq(G)$
	and the {\em track number} $\track(G)$ of a graph $G$ 
	are its covering numbers with respect to the families of 
	forests, equivalence graphs and interval graphs respectively. 
	Equivalence covering number was introduced by Duchet in 1979
	\cite{duchet1979representations} and 
	track number was introduced by Gy{\'a}rf{\'a}s and West
	in 1995 \cite{gyarfas1995multitrack}.
	For this article, we find it more natural to analyse 
	the covering number with respect to the family of chordal graphs.

\begin{definition}
\label{defnChordalCover}
	The {\em chordal covering number} $\cc(G)$ of a graph $G$ 
	is the minimum number of chordal graphs whose union is $G$.
\end{definition}

	Since equivalence graphs are interval graphs, and interval graphs are
	chordal, every graph $G$ satisfies the inequalities 
	\begin{equation}
	\label{eqnCCTrackEQ}
		\cc(G) \leq \track(G) \leq \eq(G).
	\end{equation}
	In the course of this note, it will be clear that these parameters
	are all within a factor of $2$ for line graphs of triangle-free graphs. 
	For general graphs, these parameters can be very different.
	The equivalence covering number of the $n$-vertex star graph,
	which is an interval graph,	is $n-1$. 
	As far as we have tried, we could not come up with 
	an explicit example of a chordal graph with a large track number. 
	Nevertheless we can use a counting argument to
	show that the track number of chordal graphs is unbounded. 
	Since an $n$-vertex interval graph is completely determined by
	the relative order of the $2n$ endpoints of the intervals in an
	interval representation, the number of labelled interval graphs
	on $n$ vertices is at most $(2n)!$. Hence, for any $k \geq 1$, 
	the number of labelled $n$-vertex graphs 
	which can be written as the union of $k$ interval graphs is 
	at most ${(2n)! \choose k}$ which $2^{O(kn\lg n)}$.
	On the other hand, the number of labelled $n$-vertex 
	chordal graphs is at least $2^{\Omega(n^2)}$. 
	One can see this by counting the number of labelled split graphs
	on $n$ vertices where the first $\floor{\half n}$ vertices form a clique
	and the remaining $\ceil{\half n}$ vertices can pick any subset of 
	the first $\floor{\half n}$ vertices as its neighbourhood.
	This shows that the equivalence covering number cannot be 
	bounded above by any function of the track number alone and
	the track number cannot be bounded above by any function of
	the chordal covering number alone. 

\subsection{Background}

	As mentioned earlier, we use results and techniques from
	\cite{esperet2010covering} to estimate the chordal covering number
	of line graphs.
	The starting point there is a connection that they establish 
	between equivalence coverings of $L(G)$ and 
	a certain family of orientations of $G$. 
	An {\em orientation} of an undirected simple graph $G$ is the
	directed graph formed by assigning one of the two possible
	orientations to each edge of $G$. 
	Two adjacent edges $xy$ and $xz$ of $G$ 
	are said to form an {\em elbow} in an orientation of $G$
	if both of them are directed towards $x$
	or if both of them are directed away from $x$. 
	In the first case, we will call the elbow an {\em in-elbow}
	and in the second case, we will call it an {\em out-elbow}.
	A family $\calO$ of orientations of $G$ such that 
	every pair of adjacent edges $xy$ and $xz$ in $G$
	form an in-elbow (resp., elbow)
	in at least one of the orientations in $\calO$
	is called an {\em in-elbow cover} 
	(resp., {\em elbow cover}) of $G$.
	The minimum size of an in-elbow cover (resp., elbow cover) 
	is denoted by $\inelb(G)$ (resp., $\elb(G)$).

	Esperet et al. observed that given an in-elbow cover $\calO$
	of a graph $G$, one can construct an equivalence cover
	of $L(G)$ using $|\calO|$ equivalence graphs.
	The set of vertices forming the $j$-th clique 
	in the $i$-th equivalence graph in the cover of $L(G)$,
	$1 \leq j \leq |G|$, $1 \leq i \leq |\calO|$,
	is the set of edges incident to and directed towards
	the $j$-th vertex of $G$ in the $i$-th orientation in $\calO$.
	In the other direction, they showed that,
	given an equivalence cover $\calF$ of $L(G)$,
	one can construct an in-elbow cover of $G$ using 
	$3|\calF|$ orientations of $G$. 
	Let $H$ be an equivalence subgraph of $L(G)$. 
	Every clique in $H$ corresponds to either 
	a set of edges in $G$ containing a common vertex (star-clique)
	or three edges forming a triangle in $G$ (triangle-clique).
	Consider the following three orientations of $G$ based on $H$.
	The edges of $G$ which form a star-clique in $H$ are
	oriented towards the common vertex in all the three orientations.
	The edges of $G$ which form a triangle-clique in $H$ are
	oriented such that each pair among these three edges form
	an in-elbow in one of the three orientations.
	Repeating this for every equivalence graph in an equivalence 
	cover of $L(G)$, they concluded that 
	\begin{equation}
	\label{eqnInElbEq}
		\third \inelb(G) \leq \eq(L(G)) \leq \inelb(G).
	\end{equation}
	Similarly, since the three pairs of adjacent edges in a
	triangle-clique can be elbow-covered using two orientations,
	one can also see that
	\begin{equation}
	\label{eqnElbEq}
		\half \elb(G) \leq \eq(L(G)) \leq 2 \elb(G),
	\end{equation}
	where the second inequality follows from the trivial fact that
	$\inelb(G) \leq 2 \elb(G)$.
	
	The first result in this paper is that
	$\elb(G) \leq cc(L(G))$ when $G$ is triangle-free 
	(Theorem~\ref{theoremChordalElb}).
	Before getting to it, we state and briefly discuss
	the quantitative connection between the 
	elbow covering number and the chromatic number of a graph
	that was established in \cite{esperet2010covering}.

\begin{theorem}[Theorem $10$ in \cite{esperet2010covering}]
\label{theoremElbChi}
	For any graph with at least one edge,
	\[
	\elb(G) = \ceil{\lg\lg \chi(G)} + 1.
	\]
\end{theorem}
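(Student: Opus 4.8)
The plan is to prove the clean equivalent statement that $G$ admits an elbow cover of size $k$ if and only if $\chi(G) \le 2^{2^{k-1}}$; the displayed formula then follows, since (using that a graph with an edge has $\chi(G) \ge 2$, so $\lg\lg\chi(G)$ is defined) the least integer $k$ with $2^{k-1} \ge \lg\chi(G)$ is exactly $k = \ceil{\lg\lg\chi(G)} + 1$. The first step is to reformulate elbow covers combinatorially. Given $k$ orientations, for each vertex $x$ and incident edge $e$ record the string $\sigma_x(e) \in \set{0,1}^k$ whose $i$-th bit is $1$ precisely when $e$ points toward $x$ in the $i$-th orientation. These obey the consistency relation $\sigma_y(e) = \overline{\sigma_x(e)}$ for every edge $e = xy$, and conversely any consistent family of strings comes from a unique family of $k$ orientations. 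Two edges $e,f$ at $x$ form an elbow in the $i$-th orientation exactly when $\sigma_x(e)$ and $\sigma_x(f)$ agree in coordinate $i$; hence they are elbow-covered by the family iff the two strings agree somewhere, i.e.\ iff they are not bitwise complementary. So a family of $k$ orientations is an elbow cover iff, at every vertex $x$, the local set $L_x = \set{\sigma_x(e) : e \ni x}$ contains no complementary pair.

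For the lower bound I would convert an elbow cover of size $k$ into a proper coloring valued in $\set{0,1}^{2^{k-1}}$. Group the $2^k$ strings into the $2^{k-1}$ complementary pairs and fix the canonical representative $r_p$ of each pair $p$ (say the string whose first bit is $0$). Color $x$ by the vector $c_x$ defined by $c_x(p) = 1$ iff $r_p \in L_x$. For an edge $e = xy$, let $p$ be the pair of $\sigma_x(e)$; since $L_x$ and $L_y$ each omit complementary pairs and $\sigma_y(e) = \overline{\sigma_x(e)}$, exactly one of $x,y$ has $r_p$ in its local set, so $c_x$ and $c_y$ differ in coordinate $p$. Thus $c$ is proper and $\chi(G) \le 2^{2^{k-1}}$, giving $\elb(G) \ge \ceil{\lg\lg\chi(G)} + 1$.

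For the upper bound I would reverse this encoding to manufacture $k$ orientations from a proper coloring $c \colon V \to \set{0,1}^{2^{k-1}}$. For each edge $e = xy$ select the least coordinate $p$ on which $c_x$ and $c_y$ differ, set $\sigma_x(e)$ equal to $r_p$ or $\overline{r_p}$ according to the bit $c_x(p)$, and let $\sigma_y(e)$ be its complement; this assignment is automatically consistent. The one thing to check is that no vertex acquires a complementary pair: two incident edges with \emph{different} distinguishing pairs have strings lying in disjoint complementary pairs and so cannot be complementary, while two incident edges sharing the \emph{same} distinguishing pair $p$ receive strings both governed by $c_x(p)$, hence equal and not complementary. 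This yields an elbow cover of size $k$ whenever $\chi(G) \le 2^{2^{k-1}}$, giving $\elb(G) \le \ceil{\lg\lg\chi(G)} + 1$ and completing the equality.

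The main obstacle is securing the base $2$, rather than $3$, in the double exponential. The naive coloring $x \mapsto L_x$ only yields $\chi(G) \le 3^{2^{k-1}}$, since each complementary pair may be absent or present in two ways; the saving comes from recording a single bit per pair (presence of the canonical representative) and verifying that this one bit still separates the endpoints of every edge. Dually, in the construction the delicate point is that choosing one distinguishing coordinate per edge never creates two complementary incident strings. I expect this local verification — that distinct pairs are complement-free across one another and that equal strings are harmless — to be the step that needs the most care, with the passage to the iterated logarithm and the final formula being routine.
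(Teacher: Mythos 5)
Your proof is correct. There is, however, nothing in the paper to compare it against: the statement is quoted verbatim as Theorem~10 of Esperet, Gimbel and King and is used as a black box, with no proof reproduced here. So your argument has to stand on its own, and it does. The reformulation --- a family of $k$ orientations is an elbow cover iff no local set $L_x \subseteq \set{0,1}^k$ contains a complementary pair --- is exactly the right lens, and both directions check out: for the lower bound, the colour $c_x(p)$ defined as the indicator of $r_p \in L_x$ is proper because $\sigma_x(e)$ and $\sigma_y(e) = \overline{\sigma_x(e)}$ place the canonical representative of their common pair in exactly one of $L_x, L_y$ (complement-freeness of $L_y$ is what forbids $r_p$ from also lying in $L_y$); for the upper bound, the least-differing-coordinate rule is consistent across the two endpoints precisely because $c_x(p) \neq c_y(p)$, and at any fixed vertex two incident edges either use distinct pairs (hence non-complementary strings) or the same pair (hence equal strings, and no string is self-complementary for $k \geq 1$). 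Your closing remark correctly isolates the only genuinely delicate point, namely that the naive colour $x \mapsto L_x$ only yields $3^{2^{k-1}}$ and the one-bit-per-pair encoding is what recovers base $2$. The single loose end is degenerate: if $G$ has an edge but no two adjacent edges (e.g.\ a perfect matching), the empty family vacuously elbow-covers $G$ while the formula returns $1$; this boundary convention (implicitly $\elb(G) \geq 1$) is inherited from the statement as quoted and is not a defect of your argument. The passage from the equivalence ``$\elb(G) \leq k$ iff $\chi(G) \leq 2^{2^{k-1}}$'' to $\elb(G) = \ceil{\lg\lg \chi(G)} + 1$ is also handled correctly.
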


	From Theorem~\ref{theoremElbChi} and the inequalities in 
	(\ref{eqnElbEq}), it follows that
\begin{equation} 
\label{eqnChiEq}
	\half \left(\ceil{\lg\lg \chi(G)} + 1 \right) 
		\leq eq(L(G)) 
		\leq 2 \left(\ceil{\lg\lg \chi(G)} + 1 \right). 
\end{equation} 

	They remarked towards the end of the paper that, 
	using the notion of $3$-suitability, 
	one can improve the upper bound to 
	$\lg\lg \chi(G) + \left( \half + o(1) \right) \lg\lg\lg \chi(G)$.
	A family $\calF$ of total orders of $[n]$ is \emph{$3$-suitable} if, 
	for every $3$ distinct elements $a, b, c \in [n]$ 
	there exists a total order $\sigma \in \calF$ such that 
	$a$ succeeds both $b$ and $c$ in $\sigma$ \cite{dushnik1950concerning}.
	Following Spencer \cite{spencer1972minimal}, 
	let $N(n,3)$ denote the cardinality of a 
	smallest family of total orders that is $3$-suitable for $[n]$. 
	Very tight estimates which can determine the exact value of $N(n,3)$ 
	for almost all $n$ were given by Ho\c{s}ten and Morris in 1999 
	by finding a nice equivalence of this problem to a variant of the 
	Dedekind problem \cite{hocsten1999order}. 
	It follows from there that 
	$f(n) - o(1) \leq N(n,3) \leq f(n) + 1 + o(1)$,
	where $f(n) = \lg\lg n + \half \lg\lg\lg n + \half \lg \pi$.

	Let $c : V(G) \into [k]$ be a proper vertex colouring of
	an undirected graph $G$ and 
	let $\calF$ be a family of $3$-suitable total orders of the colours $[k]$.
	For each total order in $\sigma \in \calF$ construct an orientation
	of $G$ by directing each edge $xy$ from $x$ to $y$
	if $c(x)$ precedes $c(y)$ in $\sigma$
	and the opposite otherwise.
	It is easy to verify that this family of orientations is an
	in-elbow cover of $G$.
	Hence $\inelb(G)$ and thereby $\eq(L(G))$ is at most $N(\chi(G), 3)$.

\section{Chordal covering number of line graphs}
\label{sectionProof}

	In this section we first show that, for a triangle-free graph $G$,
	the chordal covering number of $L(G)$ is at least 
	the elbow covering number of $G$.
	This lower bound can be written in terms of $\chi(G)$ using
	Theorem~\ref{theoremElbChi}.
	Using this lower bound and two classical Ramsey-theoretic results 
	from literature, we answer two questions posed by 
	Milans, Stolee, and West \cite{milans2015ordered}.

	A {\em simplicial vertex} in a graph $G$ is one whose neighbourhood
	induces a clique in $G$. A {\em perfect elimination ordering}
	of $G$ is an ordering $(v_1, \ldots, v_n)$ of $V(G)$ 
	such that $v_i$ is a simplicial vertex in $G[\{v_i, \ldots, v_n\}]$,
	for each $i$.  It is well known that a graph has a 
	perfect elimination ordering if and only if it is chordal 
	\cite{fulkerson1965incidence}.

\begin{theorem}
\label{theoremChordalElb}
For every triangle-free graph $G$,
\[
	\elb(G) \leq \cc(L(G)).
\]
\end{theorem}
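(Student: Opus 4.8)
The plan is to turn a chordal cover of $L(G)$ into an elbow cover of $G$ of the same size, producing exactly one orientation per chordal graph. So I would fix a minimum chordal cover $L(G) = H_1 \cup \cdots \cup H_k$ with $k = \cc(L(G))$, each $H_i$ a chordal subgraph of $L(G)$, and build orientations $D_1, \ldots, D_k$ of $G$ with the property that every pair of adjacent edges $e = xy$ and $f = xz$ of $G$ forms an elbow in the orientation $D_i$ for which the corresponding edge of $L(G)$ lies in $H_i$. Since the $H_i$ cover all of $E(L(G))$, some such $i$ always exists, so $\{D_1, \ldots, D_k\}$ is an elbow cover and $\elb(G) \le \cc(L(G))$ follows.

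The triangle-freeness enters through one structural fact: every clique of $L(G)$, hence every clique of each subgraph $H_i$, is a star, i.e.\ a set of edges of $G$ through a common vertex. For a fixed $i$ I would take a perfect elimination ordering $(e_1, \ldots, e_m)$ of $H_i$ (whose vertices are edges of $G$). For each $e_j$ its neighbours in $H_i$ among $\{e_{j+1}, \ldots, e_m\}$ form a clique, hence together with $e_j$ a star; let $p_j$ be the common vertex of that star, which is then an endpoint of $e_j$ (if $e_j$ has no later neighbour, leave it to be oriented arbitrarily). For an edge $e$ of $G$ lying in $H_i$ and an endpoint $v$ of $e$, write $\phi_v(e) = \mathrm{in}$ if $D_i$ directs $e$ towards $v$ and $\mathrm{out}$ otherwise; two adjacent edges sharing $v$ form an elbow in $D_i$ precisely when their $\phi_v$-values agree.

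I would then orient the edges of $H_i$ by processing them in reverse elimination order $e_m, e_{m-1}, \ldots, e_1$, maintaining the invariant that every already-oriented edge $\{e_a, e_b\}$ of $H_i$ forms an elbow in $D_i$ at the vertex shared by $e_a$ and $e_b$. When $e_j$ is reached, all its later neighbours are already oriented and lie in a common star through $p_j$, so I orient $e_j$ towards or away from $p_j$ so that $\phi_{p_j}(e_j)$ matches their common value; this satisfies every constraint between $e_j$ and its later neighbours and extends the invariant, while all constraints between $e_j$ and its earlier neighbours will be enforced when those earlier edges are processed. Edges of $G$ not appearing in $H_i$ are oriented arbitrarily.

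The step I expect to require the most care, and the crux of the argument, is verifying that the later neighbours of $e_j$ all carry the same value $\phi_{p_j}$, which is exactly what makes the single binary choice for $e_j$ feasible. Two such neighbours $f$ and $g$ are adjacent in $H_i$ and both pass through $p_j$; since $G$ is triangle-free they share exactly one vertex, which must therefore be $p_j$ itself. Applying the maintained invariant to the $H_i$-edge $\{f, g\}$ then gives $\phi_{p_j}(f) = \phi_{p_j}(g)$, so all later neighbours of $e_j$ indeed agree at $p_j$. Granting this, the greedy reverse-elimination orientation is always well defined, every edge of each $H_i$ produces an elbow in the corresponding $D_i$, and $\{D_1, \ldots, D_k\}$ is the desired elbow cover.
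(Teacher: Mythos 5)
Your proposal is correct and follows essentially the same route as the paper: one orientation per chordal graph in the cover, built greedily along a reverse perfect elimination ordering, with triangle-freeness used exactly as you do to turn the simplicial neighbourhoods into stars so that the single in/out choice at the apex is consistent. The paper organizes the verification as an induction on a ``goodness'' property propagated through the most recently oriented neighbour rather than your global invariant, but this is only a difference in bookkeeping.
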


\begin{proof}
	Let $G$ be any triangle-free graph. 
	Let $\calH$ be a smallest collection of chordal graphs 
	whose union is $L(G)$.
	Based on each chordal graph $H \in \calH$, we construct an orientation
	$O_H$ of $G$ such that every pair of edges of $G$
	which are adjacent as vertices in $H$ will form an elbow in $O_H$. 
	Since every pair of adjacent edges of $G$
	are adjacent as vertices in at least one $H$ in $\calH$, 
	it is easy to verify that the family of $|\calH|$ orientations
	constructed with the promised property will serve as an
	elbow-cover of $G$ with size $\cc(L(G))$. 

	Let $H \in \calH$ be arbitrary.
	By allowing isolated vertices if necessary, 
	we assume that $H$ is a spanning subgraph of $L(G)$.
	Consider a perfect elimination ordering $e_1, \ldots, e_m$ of $H$,
	where $m$ is the number of edges in $G$. 
	That is, $\forall i \in [m]$, $e_i$ is a simplicial vertex in 
	$H_i = H[\{e_i, \ldots, e_m\}]$. 
	In order to keep the notation clean, we will (ab)use 
	the same name for a vertex of $H$ and the corresponding edge in $G$.
	For each $i$ going from $m$ down to $1$, the edge $e_i$ in $G$
	is oriented so that it forms an elbow in $O_H$ with the 
	most recently oriented edge of $G$ 
	which is adjacent as a vertex to $e_i$ in $H_i$.
	If $e_i$ has no neighbours in $H_i$, then it is oriented arbitrarily.

	Now we argue that every pair of edges in $G$ 
	which are adjacent as vertices in $H$ 
	will be oriented to form an elbow in $O_H$.
	For each $i \in [m]$, let $N_i$ denote the neighbours of $e_i$ in $H_i$. 
	We call the orientation of an edge $e_i$ in $O_H$ ``good'' 
	if it forms an elbow in $O_H$ with every edge of $G$ which corresponds
	to a vertex in $N_i$. 
	It is enough to show that every edge $e_i, i \in [m]$	is good.  
	We show this by induction on $(m-i)$.
	Vacuously, $e_m$ is good. 
	For some $i < m$, let us assume, by induction, that
	$e_{i'}$ is good for all $i' > i$.
	If $|N_i| \leq 1$, then it is clear that $e_i$ will be oriented good. 
	If $|N_i| \geq 2$, let $j = \min\{k : e_k \in N_i\}$.
	By construction $e_i$ and $e_j$ form an elbow in $O_H$.
	Moreover, $e_j$ is oriented good in $H_j$ and hence
	$e_j$ forms an elbow with every edge corresponding to a vertex in $N_j$.
	Since $e_i$ is simplicial, $N_i \cup \{e_i\}$ induces a clique in $H_i$;
	that is, the corresponding edges are pairwise adjacent in $G$.
	Since $G$ is triangle-free, these edges share a common vertex.  
	Since $e_i$ forms an elbow with $e_j$ and $e_j$
	forms an elbow with every edge corresponding to a vertex in 
	$N_j$, which is a superset of $N_i \setminus \{e_j\}$, 
	we see that $e_i$ is also oriented good.
\end{proof}

\begin{remark}
	From Theorem~\ref{theoremElbChi}, (\ref{eqnCCTrackEQ}) and (\ref{eqnElbEq}), 
	we see that for every triangle-free graph $G$,
	\[
		\cc(L(G)) \leq \track(L(G)) \leq \eq(L(G)) \leq 2\cc(L(G)).
	\]
\end{remark}

	From Theorem~\ref{theoremElbChi} and Theorem~\ref{theoremChordalElb}
	one can immediately infer

\begin{corollary}
\label{corTriangleFreeLB}
For every triangle-free graph $G$,
\[
	\ceil{\lg\lg \chi(G)} + 1 \leq \cc(L(G)).
\]
\end{corollary}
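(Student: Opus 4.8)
The plan is to chain the two results just established, since the corollary is precisely their composition. First I would dispose of the degenerate case: if $G$ has no edges then $L(G)$ is empty and the statement is vacuous (indeed $\chi(G) \leq 1$, so the left-hand side is not even defined), so I assume $G$ has at least one edge. This is exactly the hypothesis under which Theorem~\ref{theoremElbChi} is stated, so no further care is needed there.

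With that assumption in place, I would invoke Theorem~\ref{theoremChordalElb}, which applies because $G$ is triangle-free, to obtain $\elb(G) \leq \cc(L(G))$. Then I would apply Theorem~\ref{theoremElbChi} to rewrite the left-hand side as $\elb(G) = \ceil{\lg\lg \chi(G)} + 1$. Substituting this identity into the inequality yields
\[
	\ceil{\lg\lg \chi(G)} + 1 = \elb(G) \leq \cc(L(G)),
\]
which is exactly the claimed bound.

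There is essentially no obstacle here, as the corollary is a direct corollary in the literal sense: all the work has already been done in proving the elbow-covering lower bound for $\cc(L(G))$ and in quoting the Esperet--Gimbel--King formula for $\elb(G)$. The only point worth flagging is that the triangle-free hypothesis is used solely through Theorem~\ref{theoremChordalElb} (the common-vertex argument in its proof fails for triangles), whereas Theorem~\ref{theoremElbChi} holds for arbitrary graphs with an edge; thus the restriction to triangle-free $G$ in the corollary is inherited entirely from the first inequality in the chain.
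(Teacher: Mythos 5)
Your proposal is correct and matches the paper exactly: the corollary is stated there as an immediate consequence of Theorem~\ref{theoremElbChi} and Theorem~\ref{theoremChordalElb}, which is precisely the chain $\ceil{\lg\lg \chi(G)} + 1 = \elb(G) \leq \cc(L(G))$ you give. Your remarks on the edgeless case and on where the triangle-free hypothesis enters are accurate, though the paper does not spell them out.
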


	We can use the above result together with some celebrated 
	Ramsey-theoretic results to estimate the chordal covering number
	of complete graphs and general graphs.  
	Since the family of chordal graphs is hereditary, 
	$\cc(G') \leq \cc(G)$ whenever $G'$ is an induced subgraph of a graph $G$.
	Since the line graph of a subgraph is an 
	induced subgraph of the line graph of the original graph,
	$\cc(L(H')) \leq \cc(L(H))$ whenever $H'$ is a subgraph of $H$.

	It was established by 
	Kim \cite{kim1995ramsey} that for every sufficiently large $n$,
	there exists an $n$-vertex triangle-free graph $G_n$ with 
	\[
		\chi(G_n)  \geq \frac{1}{9} \sqrt{\frac{n}{\ln n}}.
	\]
	Since $G_n$ is a subgraph of $K_n$, $\cc(L(G_n)) \leq \cc(L(K_n))$.
	This gives the lower bound in

\begin{corollary}
\label{corCompleteLB}
\[
	\lg\lg n - o(1) \leq \cc(L(K_n)) \leq 
		\lg\lg n + \half \lg\lg\lg n + \half \lg \pi + 1 + o(1).
\]
\end{corollary}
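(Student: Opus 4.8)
The plan is to prove the two inequalities separately: the lower bound by combining Kim's triangle-free construction with Corollary~\ref{corTriangleFreeLB}, and the upper bound by chaining~(\ref{eqnCCTrackEQ}) with the $3$-suitability construction and the Ho\c{s}ten--Morris estimate for $N(n,3)$.

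For the lower bound, I would invoke the monotonicity of $\cc(L(\cdot))$ already recorded above: since the triangle-free graph $G_n$ furnished by Kim is a subgraph of $K_n$, its line graph is an induced subgraph of $L(K_n)$, so $\cc(L(G_n)) \leq \cc(L(K_n))$. Applying Corollary~\ref{corTriangleFreeLB} to $G_n$ then gives
\[
	\cc(L(K_n)) \geq \cc(L(G_n)) \geq \ceil{\lg\lg\chi(G_n)} + 1 \geq \lg\lg\chi(G_n) + 1.
\]
Next I would substitute Kim's bound $\chi(G_n) \geq \frac{1}{9}\sqrt{n/\ln n}$ and take logarithms twice. One gets $\lg\chi(G_n) \geq \frac{1}{2}\lg n - \frac{1}{2}\lg\ln n - \lg 9 = \frac{1}{2}\lg n\,(1-o(1))$, and hence $\lg\lg\chi(G_n) \geq \lg\lg n - 1 + o(1)$, the trailing $-1$ arising exactly from the leading constant $\frac{1}{2}$ after one logarithm. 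The additive $+1$ supplied by Corollary~\ref{corTriangleFreeLB} then cancels this $-1$, yielding $\cc(L(K_n)) \geq \lg\lg n - o(1)$.

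For the upper bound, I would use $\cc(L(K_n)) \leq \track(L(K_n)) \leq \eq(L(K_n))$ from~(\ref{eqnCCTrackEQ}) and bound $\eq(L(K_n))$ via the $3$-suitability argument recalled at the end of the Background section. Taking the trivial proper colouring of $K_n$ with $\chi(K_n) = n$ colours together with a smallest $3$-suitable family of total orders of those colours produces an in-elbow cover of $K_n$, so $\eq(L(K_n)) \leq N(n,3)$. Feeding in the Ho\c{s}ten--Morris estimate $N(n,3) \leq f(n) + 1 + o(1)$ with $f(n) = \lg\lg n + \half \lg\lg\lg n + \half \lg \pi$ gives precisely the stated upper bound.

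The only delicate point is the asymptotic bookkeeping in the lower bound: I must verify that both the constant factor $\frac{1}{9}$ and the $\sqrt{\ln n}$ denominator in Kim's bound are swallowed by the $o(1)$ term once the double logarithm is taken, and that the $-1$ produced by the factor $\frac{1}{2}$ is offset exactly by the $+1$ of Corollary~\ref{corTriangleFreeLB}. Everything else is a straightforward composition of the monotonicity of $\cc(L(\cdot))$ under subgraphs with the two cited Ramsey-theoretic inputs, so I expect no further obstacle.
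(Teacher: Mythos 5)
Your proposal is correct and follows essentially the same route as the paper: the lower bound via Kim's triangle-free graph, the subgraph monotonicity of $\cc(L(\cdot))$, and Corollary~\ref{corTriangleFreeLB} (with the $+1$ absorbing the loss of the factor $\frac{1}{2}$ under the double logarithm), and the upper bound via $\cc \leq \eq(L(K_n)) \leq N(n,3)$ and the Ho\c{s}ten--Morris estimate. The only cosmetic quibble is that the intermediate inequality should read $\lg\lg\chi(G_n) \geq \lg\lg n - 1 - o(1)$ rather than $+\,o(1)$, but your final conclusion $\cc(L(K_n)) \geq \lg\lg n - o(1)$ is exactly right.
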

	The upper bound follows from the inequality $\eq(L(K_n)) \leq N(n,3)$.
	So we can remove the denominator from the lower bound of 
	$\Omega(\lg\lg n / \lg\lg\lg n)$  on $\track(L(K_n))$
    from \cite{milans2015ordered} as suspected by the authors. 
	Furthermore, it shows that $\track(L(K_n))$ is
	asymptotically $(1+o(1))\lg\lg n$.

	Finally, we use these two results together with 
	a beautiful result of R\"odl to prove 
	Conjecture~\ref{conjMilans}. 
	It was shown by R\"odl \cite{rodl1977chromatic} that, 
	for arbitrary positive integers $m$ and $n$,
	there exits a $\phi(m,n)$ such that if $\chi(G) \geq \phi(m,n)$, then
	the graph $G$ contains either a clique of size $m$ or a
	triangle-free subgraph $H$ with $\chi(H) = n$.
	Consider any sequence $\left(G_n \right)_{n=1}^{\infty}$ of graphs, 
	with $\chi(G_n) \tends \infty$. 
	Suppose $\track(L(G_n))$ was bounded above by some constant $b$.
	let $B = 2^{2^{b+1}}$ and choose a graph $G$ from the sequence $(G_n)$
	with $\chi(G) \geq \phi(B,B)$.
	Using R\"odl's result, we can conclude that $G$ 
	either contains a  $B$-vertex complete graph $K_B$ or 
	a triangle-free graph $H$ with $\chi(H) = B$.
	In either case, we have shown that the chordal covering number 
	of the line graph that subgraph is more than $b$ 
	(Corollaries \ref{corCompleteLB} and \ref{corTriangleFreeLB}).
	This contradiction proves 

\begin{theorem}
	\label{theoremUnbounded}
	For a sequence $\left(G_n \right)_{n=1}^{\infty}$ of graphs, 
	if $\chi(G_n) \tends \infty$, then $\cc(L(G_n)) \tends \infty$.
\end{theorem}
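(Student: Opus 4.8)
The plan is to argue by contradiction, converting the asymptotic statement into a statement about a single, carefully chosen graph in the sequence. Suppose $\cc(L(G_n))$ does not tend to infinity. Then there is a constant $b$ with $\cc(L(G_n)) \le b$ for infinitely many indices $n$; since $\chi(G_n) \tends \infty$ over all $n$, the chromatic numbers remain unbounded even along this infinite set of indices. Hence I may fix a single graph $G = G_n$ with $\cc(L(G)) \le b$ and with $\chi(G)$ as large as I wish.

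The workhorse is the monotonicity recorded just before the statement: if $H'$ is a subgraph of $H$, then $L(H')$ is an induced subgraph of $L(H)$, and since chordal graphs form a hereditary family, $\cc(L(H')) \le \cc(L(H))$. It therefore suffices to locate inside $G$ some subgraph whose line graph already has chordal covering number exceeding $b$; this will contradict $\cc(L(G)) \le b$.

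The conceptual crux, and the only genuinely nontrivial step, is that a graph of large chromatic number need be neither complete nor triangle-free, so neither Corollary~\ref{corCompleteLB} nor Corollary~\ref{corTriangleFreeLB} applies to $G$ directly. This is exactly the gap bridged by R\"odl's theorem, which supplies a function $\phi(m,n)$ such that $\chi(G) \ge \phi(m,n)$ forces $G$ to contain either a clique $K_m$ or a triangle-free subgraph $H$ with $\chi(H) = n$. Accordingly, I would first fix a threshold $B$ depending only on $b$, large enough that both $\cc(L(K_B)) > b$ (possible because Corollary~\ref{corCompleteLB} gives $\cc(L(K_B)) \tends \infty$) and $\ceil{\lg\lg B} + 1 > b$ (possible because this grows with $B$); concretely $B = 2^{2^{b+1}}$ makes the second quantity equal to $b+2$. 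Only after committing to $B$ would I select $G$ from the sequence with $\chi(G) \ge \phi(B,B)$ and apply R\"odl's theorem with $m = n = B$.

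R\"odl's dichotomy then closes the argument in both cases. If $G \supseteq K_B$, monotonicity together with Corollary~\ref{corCompleteLB} yields $b \ge \cc(L(G)) \ge \cc(L(K_B)) > b$. If instead $G$ contains a triangle-free $H$ with $\chi(H) = B$, monotonicity together with Corollary~\ref{corTriangleFreeLB} yields $b \ge \cc(L(G)) \ge \cc(L(H)) \ge \ceil{\lg\lg B} + 1 > b$. Either way one reaches $b > b$, the desired contradiction. I expect no real computational difficulty once R\"odl's theorem is invoked; the single point requiring care is the order of quantifiers, namely fixing $b$ first, then choosing $B = B(b)$ so that both lower bounds strictly exceed $b$, and only then extracting from the sequence a graph $G$ of sufficiently large chromatic number.
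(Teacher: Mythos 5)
Your proposal is correct and follows essentially the same route as the paper: assume a bound $b$, set $B = 2^{2^{b+1}}$, pick $G$ in the sequence with $\chi(G) \geq \phi(B,B)$, and apply R\"odl's dichotomy together with Corollaries~\ref{corCompleteLB} and~\ref{corTriangleFreeLB} and the monotonicity of $\cc(L(\cdot))$ under subgraphs. Your added care about the order of quantifiers and about taking $B$ large enough to absorb the $o(1)$ in Corollary~\ref{corCompleteLB} is a slight refinement of the paper's wording, not a different argument.
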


	Thus we affirm Conjecture~\ref{conjMilans}. 
	Further, since $\cc(L(G)) \leq \eq(L(G)) \leq N(\chi(G), 3)$,
	we see that, for a family of graphs $\mathcal{G}$,
	$\{\tau(L(G)):G \in \mathcal{G}\}$ is bounded if and only if 
	$\{\chi(G):G \in \mathcal{G}\}$ is bounded.

\section{Concluding remarks}
	The function $\phi(m,n)$ obtained by R\"odl 
	is a tower of $n$'s of height $m$. 
	Hence the lower bound obtained for $\track(L(G))$ for a general
	graph $G$ in terms of $\chi(G)$ is of very small order. 
	We suspect that, like $\eq(L(G))$, $\cc(L(G))$ might also be
	bounded below by $\Omega(\lg\lg \chi(G))$.

\bibliographystyle{alpha}

\end{document}